\documentclass[11pt]{article}

\usepackage{amsmath,amssymb,amsthm} 
\usepackage[mathscr]{eucal} 
\usepackage{latexsym}
\theoremstyle{plain}
\usepackage{enumerate}
\usepackage{hyperref}
\usepackage{color}
\usepackage{authblk}

\usepackage[T1]{fontenc}
\usepackage[bitstream-charter]{mathdesign}
\usepackage{eucal}

\setlength{\oddsidemargin}{-7mm}
\setlength{\evensidemargin}{-7mm}
\voffset=-15mm
\setlength{\textwidth}{180mm} 
\setlength{\textheight}{230mm}


\newtheorem{thm}{Theorem}
\newtheorem{lem}[thm]{Lemma}
\newtheorem{rem}{Remark}


\newcommand{\norm}[1]{{\left\Vert #1 \right\Vert}}
\newcommand{\set}[1]{{\left\lbrace #1 \right\rbrace}}

\DeclareMathOperator*{\divg}{div}
\DeclareMathOperator*{\supp}{supp}


\title{New Liouville-type theorem for the stationary tropical climate model}

\author{Youseung Cho}

\author{Hyunjin In}

\author{Minsuk Yang}

\affil{Yonsei University, Department of Mathematics \\
50 Yonseiro, Seodaemungu, Seoul 03722, Republic of Korea\\
E-mail: m.yang@yonsei.ac.kr}



\begin{document} 

\date{\today}

\maketitle

\begin{abstract} 
We study the Liouville-type theorem for smooth solutions to the steady 3D tropical climate model. 
We prove the Liouville-type theorem if a smooth solution satisfies a certain growth condition in terms of $L^p$-norm on annuli, which improves the previous results, Theorem 1.1 (Math. Methods Appl. Sci. 44, 2021) by Ding and Wu, and Theorem 1.1 and Theorem 1.2 (Appl. Math. Lett. 138, 2023) by Yuan and Wang.

\end{abstract}

\thanks{2020 {\it Mathematics Subject Classification.\/}
35Q35; 35B65; 35A02. 
\endgraf
{\it Key Words and Phrases.} 
tropical climate model;
Liouville-type theorem; 
iteration method}

\maketitle

\section{Introduction}
\label{S1}

The nonlinear partial differential equations
\begin{equation}
\label{E11}
\begin{split}
- \Delta u + (u \cdot \nabla ) u + \nabla \pi 
+ \divg (v \otimes v) &= 0, \\
- \Delta v + (u \cdot \nabla ) v + \nabla \theta 
+ (v \cdot \nabla )u &= 0, \\
- \Delta \theta + u \cdot \nabla \theta 
+ \divg v &= 0, \\
\divg u &= 0,
\end{split}
\end{equation}
in $\mathbb{R}^3$, describe the stationary tropical climate model.
Here, $u = (u_1,u_2,u_3)$ is the barotropic mode, $v = (v_1, v_2, v_3)$ is the first baroclinic mode of vector velocity, $\theta$ is the temperature, and $\pi$ is the pressure.

One of the most famous Liouville-type theorems is that if $f$ solves the Laplace equation on $\mathbb{R}^3$ and $f \in L^\infty(\mathbb{R}^3)$, then $f$ must be constant.
In general, Liouville-type theorems are about finding some conditions to show that solutions to some PDEs become trivial.
Recently, there have been many efforts to establish Liouville-type theorems for various fluid equations.
For the stationary Navier–Stokes equations, one can find interesting results, for example, in 
Chae \cite{MR3162482}, Seregin \cite{MR3538409}, Kozono--Terasawa--Wakasugi \cite{MR3571910}, Chae--Wolf \cite{MR3959933}, Tsai \cite{MR4354995}, and Cho--Choi--Yang \cite{MR4572397}.
For the stationary tropical climate model, there are only a few results, \cite{MR4342846}, \cite{MR4522953}, and \cite{MR4530222}.
Authors in \cite{MR4342846} proved Liouville-type theorems for \eqref{E11} if a smooth solution belongs to certain Lebesgue spaces or Lorentz spaces. 
Later on, they proved Liouville-type theorems assuming that a smooth solution belongs to the mixed local Morrey spaces.
Recently, authors in \cite{MR4522953} extended some results in \cite{MR4342846} in terms of the local Morrey spaces.

Our aim is to establish an improved Liouville-type theorem for the stationary tropical climate model.
We will use the following notation
\begin{equation}
\label{E12}
G(f;\alpha,\rho) : = \rho^{- \left(\frac{2}{\alpha} - \frac{1}{3} \right)} \norm{f}_{L^\alpha(B(2\rho) \setminus B(\rho/2))}
\quad \text{and} \quad
H(f;\alpha,\rho) : = \rho^{- \left( \frac{3}{2\alpha} - \frac{1}{4} \right)} \norm{f}_{L^\alpha(B(2\rho) \setminus B(\rho/2))}, 
\end{equation}
where for functions $f \in L^1_{loc}(\mathbb{R}^3)$, integrability exponents $1 \le \alpha < \infty$, and radii $1 < \rho < \infty$.

Here is our main result.

\begin{thm}
\label{T1}
Let $(u,v,\theta)$ be a smooth solution to \eqref{E11}.
Then $u = v = \theta = 0$ if one of the following conditions holds.
\begin{enumerate}
\item
For some $3/2 < \alpha < 3$ and $1 \le \beta, \gamma < 2$,
\begin{equation}
\label{A1}
\liminf_{\rho \to \infty} \left(G(u;\alpha, \rho) + H(v;\beta, \rho) + H(\theta;\gamma, \rho) \right) < \infty.
\end{equation}
\item
\begin{equation}
\label{A2}
\liminf_{\rho \to \infty} \left(G(u;3, \rho) + H(v;2, \rho) \right) = 0 \quad \text{and} \quad 
\limsup_{\rho \to \infty} H(\theta;2, \rho) < \infty.
\end{equation}
\item
\begin{equation}
\label{A3}
\liminf_{\rho \to \infty} \left(G(u;3, \rho) + H(\theta;2, \rho) \right) 
= 0 
\quad \text{and} \quad 
\limsup_{\rho \to \infty} H(v;2, \rho) < \infty.
\end{equation}
\end{enumerate}
\end{thm}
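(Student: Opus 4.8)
The plan is to run a localized energy estimate on dyadic annuli and then remove the boundary energy by a hole-filling iteration. Fix a cutoff $\phi=\phi_\rho$ with $\phi\equiv1$ on $B(\rho)$, $\supp\phi\subset B(2\rho)$ and $|\nabla^k\phi|\lesssim\rho^{-k}$. First I would test the first equation of \eqref{E11} against $u\phi^2$, the second against $v\phi^2$, the third against $\theta\phi^2$, and add. The crucial point is the algebraic structure: since $\divg u=0$, the transport terms $(u\cdot\nabla)u\cdot u$, $(u\cdot\nabla)v\cdot v$, $(u\cdot\nabla\theta)\theta$ are exact derivatives and leave only cutoff errors; the cross terms $\divg(v\otimes v)\cdot u$ and $(v\cdot\nabla)u\cdot v$ cancel up to a term supported on $\{\nabla\phi\neq0\}$; and $\nabla\theta\cdot v$ together with $(\divg v)\theta$ cancel after one integration by parts, again modulo a cutoff error. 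This is what makes the three coupled equations combine into a single coercive identity for $\int(|\nabla u|^2+|\nabla v|^2+|\nabla\theta|^2)\phi^2$.

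After absorbing the terms $\int(|\nabla u||u|+|\nabla v||v|+|\nabla\theta||\theta|)\phi|\nabla\phi|$ by Young's inequality, the identity becomes a Caccioppoli-type bound whose right-hand side, all supported on the annulus $B(2\rho)\setminus B(\rho)$, consists of: the quadratic error $\int(|u|^2+|v|^2+\theta^2)|\nabla\phi|^2$; the nonlinear terms $\int(|u|^3+|u||v|^2+|u|\theta^2+|\theta||v|)\phi|\nabla\phi|$; and the pressure term $\int|\pi||u|\phi|\nabla\phi|$. For the pressure I would use $-\Delta\pi=\partial_i\partial_j(u_iu_j+v_iv_j)$ together with the local pressure decomposition: on each annulus split $\pi$ into a Calderón--Zygmund part, controlled by $\norm{u}_{L^\alpha}^2+\norm{v}_{L^\alpha}^2$ over a slightly larger annulus, and a harmonic part absorbed through interior estimates. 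The pressure term then scales exactly like the cubic terms above.

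Next I would convert every annulus integral into the scale-invariant quantities \eqref{E12}. By Hölder and Gagliardo--Nirenberg on the annulus, each nonlinear term splits as a power of $G(u;\alpha,\rho)$, $H(v;\beta,\rho)$, $H(\theta;\gamma,\rho)$ times a fractional power of the local Dirichlet energy $\int_{B(2\rho)\setminus B(\rho)}|\nabla(u,v,\theta)|^2$; the exponents in \eqref{E12} are tuned precisely so that the $\rho$-powers cancel, leaving scale-invariant bounds. The restriction $3/2<\alpha$ ensures that interpolating $\norm{u}_{L^3}$ against $\norm{u}_{L^\alpha}$ costs a gradient power strictly below $2$; for the genuinely cubic couplings I would place $u$ in its $L^\alpha$ norm and let $v$ (resp.\ $\theta$) carry the full interpolation energy, so that $\beta,\gamma<2$ guarantees a gradient power again strictly below $2$. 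Young's inequality then yields $\varepsilon\int_{B(2\rho)\setminus B(\rho)}|\nabla(u,v,\theta)|^2$ plus bounded powers of $G$ and $H$. In the borderline cases (2)--(3) the exponents are critical and these powers are linear in $G$, $H$, which is exactly why the hypotheses must be sharpened to a vanishing $\liminf$ together with a finite $\limsup$.

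Writing $E(\rho):=\int_{B(\rho)}|\nabla(u,v,\theta)|^2$, the previous steps give $E(\rho)\le\varepsilon E(2\rho)+C\Phi(\rho)$, where $\Phi$ is a finite sum of powers of $G(u;\alpha,\rho)$, $H(v;\beta,\rho)$, $H(\theta;\gamma,\rho)$. Adding $\varepsilon E(\rho)$ to both sides (hole-filling) produces
\[
E(\rho)\le\vartheta\,E(2\rho)+c\,\Phi(\rho),\qquad \vartheta=\frac{\varepsilon}{1+\varepsilon}<1.
\]
Iterating this along a sequence $\rho_j\to\infty$ realizing the $\liminf$ in \eqref{A1} (resp.\ \eqref{A2}, \eqref{A3}) and using $\vartheta<1$ to kill the boundary contribution while the accumulated $\Phi$-terms stay bounded (resp.\ tend to $0$), I would conclude $E\equiv0$. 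Thus $u,v,\theta$ are constants, and the growth conditions rule out nonzero constants---e.g.\ a constant $c\neq0$ gives $G(c;\alpha,\rho)\sim\rho^{1/\alpha+1/3}\to\infty$ and $H(c;\alpha,\rho)\sim\rho^{3/(2\alpha)+1/4}\to\infty$---so $u=v=\theta=0$. I expect the two main obstacles to be the local pressure estimate on annuli, making sure the harmonic correction does not destroy scale invariance, and the rigor of the iteration: one must verify that the remainder $\vartheta^{n}E(2^{n}\rho)$ genuinely vanishes along the selected radii rather than merely remaining bounded.
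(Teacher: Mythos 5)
Your overall skeleton (annulus Caccioppoli estimate, conversion to the scale-invariant quantities \eqref{E12}, absorption by Young, iteration) is in the same spirit as the paper's, and your pressure treatment is a legitimate variant: the paper avoids the pressure entirely by testing the first equation against $u\varphi_{r,R}-w$, where $w=\mathcal{B}(u\cdot\nabla\varphi_{r,R})$ is a Bogovskii correction making the test function divergence-free, rather than against $u\phi^2$ with a Calder\'on--Zygmund/harmonic decomposition of $\pi$; either route can be made to work. The fatal problems are in your iteration scheme. You iterate $E(\rho)\le\vartheta E(2\rho)+c\,\Phi(\rho)$ across dyadic scales $2^n\rho\to\infty$. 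First, this requires $\Phi(2^k\rho)$ to be bounded (or at least summable against $\vartheta^k$) at \emph{every} dyadic scale, but the hypotheses \eqref{A1}--\eqref{A3} are $\liminf$ conditions: they control $G$ and $H$ only along some sequence $\rho_j\to\infty$, and the intermediate scales $2^k\rho_j$ need not lie anywhere near that sequence, so $\Phi(2^k\rho_j)$ is completely uncontrolled. Second, the remainder $\vartheta^{n}E(2^{n}\rho)$ need not vanish: at this stage of the argument there is no a priori growth bound on the local Dirichlet energy of a smooth solution, so it may grow faster than $\vartheta^{-n}$; you flag this yourself as an ``obstacle,'' but it cannot be repaired within a scheme that spans infinitely many scales. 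The paper avoids both problems at once by iterating in the \emph{radius} variable inside a single annulus: for $\rho_j\le r<R\le 2\rho_j$ it proves $E(B(r))\le\frac12 E(B(R))+C(M)\left((R-r)^{-1}\rho_j\right)^k$ and applies Giaquinta's iteration lemma, whose tail involves only $E(B(2\rho_j))$, finite for a smooth solution; this yields the uniform bound $E(B(\rho_j))\le C(M)$ and hence $\nabla u,\nabla v,\nabla\theta\in L^2(\mathbb{R}^3)$.

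There is a second gap at the end. In case \eqref{A1} the hypothesis gives only boundedness of $G$ and $H$ along $\rho_j$, so your accumulated $\Phi$-terms ``stay bounded'' and your iteration could at best give $E<\infty$, not $E\equiv 0$. The paper needs a second pass to upgrade finiteness to vanishing: once the gradients are globally square-integrable, it reapplies the energy inequality with $r=\rho_j$, $R=2\rho_j$ (display \eqref{LEI2}); the term $E(A(2\rho_j))$ then tends to $0$ as the tail of a convergent integral, and every remaining term carries either a factor $\norm{\nabla u}_{L^2(A(2\rho_j))},\norm{\nabla v}_{L^2(A(2\rho_j))},\norm{\nabla\theta}_{L^2(A(2\rho_j))}\to 0$ or a strictly negative power of $\rho_j$. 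It is exactly here that the strict inequalities $\alpha<3$ and $\beta,\gamma<2$ are used: at $\beta=2$ or $\gamma=2$ the relevant exponents degenerate to $0$ and the cross term $\rho_j^{-1}\norm{v}_{L^2}\norm{\theta}_{L^2}$ no longer vanishes, which is precisely why cases \eqref{A2} and \eqref{A3} require the strengthened vanishing-$\liminf$ plus bounded-$\limsup$ hypotheses. Your sketch contains no analogue of this step, so even with a working iteration it would not conclude $u=v=\theta=0$ under \eqref{A1}.
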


\begin{rem}
\label{r1}
\begin{enumerate}
\item
In \cite{MR4342846}, the conditions of Theorem 1.1 are $u \in L^p (\mathbb{R}^3)$ and $v,\theta \in L^p (\mathbb{R}^3) \cap L^3 (\mathbb{R}^3)$ with $3 \le p \le 9/2$. 
Using the Jensen inequality, one can easily check that \eqref{A2} and \eqref{A3} are weaker than $u \in L^p(\mathbb{R}^3)$ and $v, \theta \in L^q(\mathbb{R}^3)$ for some $3 \le p \le 9/2$ and $2 \le q \le 3$. 
Indeed, we removed the additional conditions $v, \theta \in L^p(\mathbb{R}^3)$.

\item
For $q \ge 0$ and $1 < p < \infty$, the local Morrey space $M_q^p (\mathbb{R}^3)$ is defined to be the set of functions satisfying $\norm{f}_{M_q^p} := \sup_{R \ge 1} R^{-\frac{q}{p}} \norm{f}_{L^p(B(R))} < \infty$ and $M_{q, 0}^p (\mathbb{R}^3)$ is a subspace of $M_q^p (\mathbb{R}^3)$ satisfying 
\[\lim_{R \to \infty} R^{-\frac{q}{p}} \norm{f}_{L^p(B(R) \setminus B(R/2))} = 0.\]
In \cite{MR4522953}, the conditions of Theorem 1.1 are $(u,v,\theta) \in M_{q,0}^p (\mathbb{R}^3)$ and $v \in M_{q,0}^2 (\mathbb{R}^3)$, $\theta \in M_q^2 (\mathbb{R}^3)$ or $v \in M_q^2 (\mathbb{R}^3)$, $\theta \in M_{q,0}^2 (\mathbb{R}^3)$ with $3 \le p < 9/2$, $0 < q \le 1$, and $3 q + 2 p \le 9$, and the conditions of Theorem 1.2 are $(u,v,\theta) \in M_q^p (\mathbb{R}^3)$ and $(v,\theta) \in M_q^2 (\mathbb{R}^3)$ with $3 \le p < 9/2$, $0 < q < 1$, and $3 q + 2 p < 9$.

\item
Using the Jensen inequality, one can easily check that \eqref{A2} is weaker than the conditions $u \in M^{p_1}_{q_1,0}(\mathbb{R}^3)$, $v \in M^{p_2}_{q_2,0}(\mathbb{R}^3)$, and $\theta \in M^{p_2}_{q_2}(\mathbb{R}^3)$ with $3 \le p_1 \le 9/2$, $3q_1 \le 9 - 2p_1$, $2 \le p_2 \le 3$, and $q_2 \le 3 - p_2$.
Similarly, \eqref{A3} is weaker than the conditions $u \in M^{p_1}_{q_1,0}(\mathbb{R}^3)$, $v \in M^{p_2}_{q_2}(\mathbb{R}^3)$, and $\theta \in M^{p_2}_{q_2,0}(\mathbb{R}^3)$ with the same indices, and 
\eqref{A1} is weaker than the conditions $u \in M^{p_1}_{q_1}(\mathbb{R}^3)$ with $3 \le p_1 < 9/2$ and $3q_1 < 9 - 2p_1$ and $v, \theta \in M^{p_2}_{q_2}(\mathbb{R}^3)$ with $2 \le p_2 < 3$ and $q_2 < 3-p_2$.
Therefore, Theorem 1.1 and Theorem 1.2 of \cite{MR4522953} are now special cases of Theorem \ref{T1} in this paper.
\end{enumerate}
\end{rem}

\section{Preliminary}
\label{S2}

Throughout this paper, we shall use the following notations.

\begin{itemize}
\item
For $1 \le p \le \infty$, we denote the H\"older conjugate of p by $p' := p/(p-1)$.
\item
For $0 < R < \infty$, we denote open balls and annuli by
\[
B(R) = \set{x \in \mathbb{R}^3 : 0 \le |x| < R}
\quad \text{and} \quad 
A(R) = \set{x \in \mathbb{R}^3 : R/2 < |x| < R}.
\]
\item
We denote the Lebesgue measure of a measurable set $\Omega \subset \mathbb{R}^3$ by $|\Omega|$ and the Lebesgue integral of $f$ over $\Omega$ by $\int_\Omega f dx$.

\item
We denote $L_0^p(\Omega) = \set{f \in L^p (\Omega) : f_\Omega = 0}$, where the average value of $f$ over $\Omega$ is given by $f_\Omega = \frac{1}{|\Omega|} \int_\Omega f dx$.
\item
We denote $A \lesssim_t B$ if $|A| \le C(t) |B|$ for some positive constant $C(t)$ depending only on some parameter $t$.
\item
Let $\varphi_{r,R} \in C_c^{\infty} (B(R))$ be a radially decreasing function satisfying $\varphi_{r,R} = 1$ on $B(r)$ and 
\[
(R-r) |\nabla \varphi_{r,R}| + (R-r)^2 |\nabla^2 \varphi_{r,R}| < \infty.
\]
\end{itemize}

We recall the Poincar\'e--Sobolev inequality.

\begin{lem}[Theorem 3.15 of \cite{MR1962933}]
\label{L11}
Let $\Omega \subset \mathbb{R}^n$ be a bounded connected open set with Lipschitz-continuous boundary $\partial \Omega$.
Then there exists a positive constant $C(n,p,\Omega)$ such that if $p < n$, then we have for every $f \in W^{1,p} (\Omega)$,
\begin{equation}
\label{E15}
\norm{f - f_{\Omega}}_{L^\frac{np}{n-p} (\Omega)}
\le C(n,p,\Omega) \norm{\nabla f}_{L^p (\Omega)}.
\end{equation}
\end{lem}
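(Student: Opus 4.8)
The plan is to derive \eqref{E15} by splitting it into two classical ingredients on a bounded Lipschitz domain and then chaining them. Write $p^{*} := np/(n-p)$ for the Sobolev conjugate (well defined since $p < n$), and set $g := f - f_{\Omega}$, so that $\nabla g = \nabla f$ and $g_{\Omega} = 0$. First I would bound $\norm{g}_{L^{p^{*}}(\Omega)}$ by the full $W^{1,p}$-norm of $g$ via the Sobolev embedding $W^{1,p}(\Omega) \hookrightarrow L^{p^{*}}(\Omega)$; then I would absorb the resulting lower-order term $\norm{g}_{L^{p}(\Omega)}$ using the Poincar\'e--Wirtinger inequality $\norm{g}_{L^{p}(\Omega)} \lesssim_{n,p,\Omega} \norm{\nabla g}_{L^{p}(\Omega)}$. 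Composing the two estimates yields \eqref{E15} with a constant depending only on $n$, $p$, and the Lipschitz character of $\Omega$.

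For the Sobolev step I would invoke a bounded linear extension operator $E \colon W^{1,p}(\Omega) \to W^{1,p}(\mathbb{R}^{n})$, whose existence relies on the Lipschitz regularity of $\partial\Omega$ and which satisfies $\norm{Eg}_{W^{1,p}(\mathbb{R}^{n})} \lesssim_{n,p,\Omega} \norm{g}_{W^{1,p}(\Omega)}$. Applying the Gagliardo--Nirenberg--Sobolev inequality on $\mathbb{R}^{n}$ to $Eg$ and restricting back to $\Omega$ gives
\[
\norm{g}_{L^{p^{*}}(\Omega)}
\le \norm{Eg}_{L^{p^{*}}(\mathbb{R}^{n})}
\lesssim_{n,p} \norm{\nabla(Eg)}_{L^{p}(\mathbb{R}^{n})}
\le \norm{Eg}_{W^{1,p}(\mathbb{R}^{n})}
\lesssim_{n,p,\Omega} \norm{g}_{L^{p}(\Omega)} + \norm{\nabla g}_{L^{p}(\Omega)}.
\]

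The second ingredient, the Poincar\'e--Wirtinger inequality, I would establish by a compactness--contradiction argument. If it failed, there would be a sequence $(g_{k}) \subset W^{1,p}(\Omega)$ with $(g_{k})_{\Omega} = 0$, $\norm{g_{k}}_{L^{p}(\Omega)} = 1$, and $\norm{\nabla g_{k}}_{L^{p}(\Omega)} \to 0$. This sequence is bounded in $W^{1,p}(\Omega)$, so by the Rellich--Kondrachov compact embedding (again using that $\partial\Omega$ is Lipschitz) a subsequence converges strongly in $L^{p}(\Omega)$ to some $g$ with $\norm{g}_{L^{p}(\Omega)} = 1$ and $g_{\Omega} = 0$. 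Since $\nabla g_{k} \to 0$ in $L^{p}$, the weak gradient of $g$ vanishes; the connectedness of $\Omega$ then forces $g$ to be constant, and $g_{\Omega} = 0$ gives $g \equiv 0$, contradicting $\norm{g}_{L^{p}(\Omega)} = 1$.

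The main work is concentrated in the two geometric tools — the bounded extension operator and the Rellich--Kondrachov compactness — both of which require the Lipschitz hypothesis and are precisely where the dependence of the constant on $\Omega$ enters; connectedness is used only in the final step, to pass from a vanishing gradient to a constant. Once these are in hand the remainder is routine: substituting the Poincar\'e--Wirtinger bound into the Sobolev estimate absorbs the term $\norm{g}_{L^{p}(\Omega)}$ and leaves $\norm{f - f_{\Omega}}_{L^{p^{*}}(\Omega)} \lesssim_{n,p,\Omega} \norm{\nabla f}_{L^{p}(\Omega)}$, which is exactly \eqref{E15}.
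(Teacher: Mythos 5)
The paper does not prove this lemma at all: it is quoted verbatim as Theorem 3.15 of Giusti's book, so there is no internal proof to compare against. Your argument is correct and is essentially the standard textbook proof of the Sobolev--Poincar\'e inequality: the Lipschitz extension operator plus the Gagliardo--Nirenberg--Sobolev inequality on $\mathbb{R}^n$ gives the embedding $W^{1,p}(\Omega) \hookrightarrow L^{np/(n-p)}(\Omega)$, and the Rellich--Kondrachov compactness--contradiction argument (where connectedness is correctly isolated as the step forcing the limit to be constant) supplies the Poincar\'e--Wirtinger bound that absorbs the lower-order term. The only caveat worth noting is that the compactness argument yields a constant $C(n,p,\Omega)$ that is not explicit; this is harmless here, since the lemma permits dependence on $\Omega$, and the paper's Remark \ref{r2} recovers the $R$-independence on annuli $A(R)$ separately by scaling rather than from the lemma itself.
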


\begin{rem}
\label{r2}
Using the scaling argument, the constant in \eqref{E15} does not depend on $R$ when $\Omega = A(R) \subset \mathbb{R}^3$.
Using \eqref{E15} and the Jensen inequality, we obtain that for $1 \le p \le \infty$,
\begin{equation}
\label{PS}
\norm{f}_{L^6(A(R))}
\lesssim \norm{\nabla f}_{L^2(A(R))}
+ R^{\frac{1}{2} - \frac{3}{p}} \norm{f}_{L^p(A(R))},
\end{equation}
where the implied constant is now absolute.
\end{rem}

We end this section by presenting the following form of local energy inequality.

\begin{lem}
\label{L12}
Let $(u,v,\theta)$ be a smooth solution to \eqref{E11}.
Let $1 < \alpha < \infty$ and $E(\Omega) := \norm{\nabla u}_{L^2 (\Omega)}^2 + \norm{\nabla v}_{L^2 (\Omega)}^2 + \norm{\nabla \theta}_{L^2 (\Omega)}^2.$
Then for $1 \le \rho \le r < R \le 2\rho$, we have
\begin{align}
\begin{split}
\label{LEI}
E(B(r))
&\le C(\alpha) (R-r)^{-1} \left( (R-r)^{-1} \rho^\frac{3}{\alpha} + \norm{u}_{L^{\alpha} (A(R))} \right)
\left( \norm{u}_{L^{2\alpha'} (A(R))}^2 
+ \norm{v}_{L^{2\alpha'} (A(R))}^2 
+ \norm{\theta}_{L^{2\alpha'} (A(R))}^2 \right) \\
&\quad + C (R-r)^{-1} \norm{v}_{L^2(A(R))} \norm{\theta}_{L^2 (A(R))} 
+ \frac{1}{8} E(A(R)).
\end{split}
\end{align}
\end{lem}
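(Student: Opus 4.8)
The plan is to derive \eqref{LEI} as a Caccioppoli-type local energy estimate, testing each equation of \eqref{E11} against the cut-off multiple of its own unknown and exploiting the algebraic structure of the coupling terms. Fix $\varphi = \varphi_{r,R}$, so that $\varphi \equiv 1$ on $B(r)$, $\supp \varphi \subset B(R)$, $|\nabla\varphi| \lesssim (R-r)^{-1}$, $|\nabla^2\varphi| \lesssim (R-r)^{-2}$, and (since $R/2 \le \rho \le r$) $\nabla\varphi$ is supported in $\overline{A(R)}$. I would multiply the first equation by $\varphi^2 u$, the second by $\varphi^2 v$, the third by $\varphi^2\theta$, integrate over $\mathbb{R}^3$, and sum. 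After one integration by parts the three viscous terms yield the good quantity $\int \varphi^2(|\nabla u|^2+|\nabla v|^2+|\nabla\theta|^2) \ge E(B(r))$ together with remainders controlled by $(R-r)^{-2}\int_{A(R)}(|u|^2+|v|^2+|\theta|^2)$. Each of the three transport terms integrates, via $\divg u = 0$, to $-\int \varphi(\nabla\varphi\cdot u)(|u|^2+|v|^2+|\theta|^2)$, which is $\lesssim (R-r)^{-1}\int_{A(R)}|u|(|u|^2+|v|^2+|\theta|^2)$.

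The heart of the matter is that the genuinely coupling terms cancel up to boundary contributions. I would pair $\divg(v\otimes v)\cdot\varphi^2 u$ in the first equation with $(v\cdot\nabla)u\cdot\varphi^2 v$ in the second: integrating the former by parts produces $-\int \varphi^2 v_i v_j \partial_j u_i$ plus a $\nabla\varphi$ term, and the $\nabla u$ contribution is exactly the opposite of the latter, so their sum collapses to $-2\int \varphi(\nabla\varphi\cdot v)(u\cdot v)$, with no $\nabla u$ surviving and, crucially, no quartic $|v|^4$ term ever arising. Likewise I would pair $\nabla\theta\cdot\varphi^2 v$ in the second equation with $\divg v\cdot\varphi^2\theta$ in the third: integrating the former by parts produces a term $-\int \varphi^2\theta\,\divg v$ that cancels the latter identically, leaving only $-2\int \varphi\theta(\nabla\varphi\cdot v)$. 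These two survivors are precisely the sources on the right of \eqref{LEI}: the first is $\lesssim (R-r)^{-1}\int_{A(R)}|u||v|^2$, and the second is $\lesssim (R-r)^{-1}\norm{v}_{L^2(A(R))}\norm{\theta}_{L^2(A(R))}$, which is the cross term.

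The main obstacle is the pressure term $\int \nabla\pi\cdot\varphi^2 u$, since \eqref{LEI} involves no norm of $\pi$. Rather than invoke an interior $L^p$ pressure estimate, which would force the norms onto a larger annulus, I would render the test field solenoidal. Set $g := \divg(\varphi^2 u) = 2\varphi\,\nabla\varphi\cdot u$, which is supported in $\overline{A(R)}$ and satisfies $\int_{A(R)} g = -\int \varphi^2\divg u = 0$; let $w := \mathcal{B}g$ be the Bogovskii correction on $A(R)$, so that $\divg w = g$, $\supp w \subset \overline{A(R)}$, and, by scale-invariance of the gradient bound on the fixed-ratio annulus, $\norm{\nabla w}_{L^p(A(R))} \lesssim \norm{g}_{L^p(A(R))} \lesssim (R-r)^{-1}\norm{u}_{L^p(A(R))}$. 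Testing the first equation against the divergence-free field $\varphi^2 u - w$ makes the pressure drop out, at the cost of three correction terms: $\int \nabla u : \nabla w \le \tfrac18\norm{\nabla u}_{L^2(A(R))}^2 + C(R-r)^{-2}\norm{u}_{L^2(A(R))}^2$ after Young's inequality, together with $-\int (u\otimes u):\nabla w$ and $-\int (v\otimes v):\nabla w$, which are $\lesssim (R-r)^{-1}\norm{u}_{L^\alpha(A(R))}(\norm{u}_{L^{2\alpha'}(A(R))}^2 + \norm{v}_{L^{2\alpha'}(A(R))}^2)$ by Hölder with exponents $\alpha,\alpha'$.

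It remains to assemble the pieces. Every quadratic remainder $(R-r)^{-2}\int_{A(R)}|f|^2$ is converted by Hölder into $(R-r)^{-2}|A(R)|^{1/\alpha}\norm{f}_{L^{2\alpha'}(A(R))}^2 \lesssim (R-r)^{-2}\rho^{3/\alpha}\norm{f}_{L^{2\alpha'}(A(R))}^2$ using $|A(R)| \lesssim \rho^3$, and every cubic remainder $(R-r)^{-1}\int_{A(R)}|u||f|^2$ becomes $(R-r)^{-1}\norm{u}_{L^\alpha(A(R))}\norm{f}_{L^{2\alpha'}(A(R))}^2$; collecting these reproduces the first group in \eqref{LEI} with its factor $(R-r)^{-1}((R-r)^{-1}\rho^{3/\alpha}+\norm{u}_{L^\alpha(A(R))})$. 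The only gradient-of-unknown term requiring absorption is the single $\tfrac18\norm{\nabla u}_{L^2(A(R))}^2 \le \tfrac18 E(A(R))$ produced above, which accounts for the last term; the cross term completes the estimate. The delicate point is thus entirely the pressure, resolved by the Bogovskii construction while keeping every norm on the annulus $A(R)$, and the remainder is routine bookkeeping with Hölder's and Young's inequalities.
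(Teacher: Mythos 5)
Your proposal is correct and follows essentially the same route as the paper: test the three equations with cutoff multiples of $u$, $v$, $\theta$, correct the $u$-test function by the Bogovskii field $w=\mathcal{B}(\divg(\varphi^2 u))$ to eliminate the pressure, exploit exactly the same cancellations (the $\divg(v\otimes v)$/$(v\cdot\nabla)u$ pair and the $\nabla\theta$/$\divg v$ pair collapsing to the two cross terms), and close with H\"older and Young, absorbing only the single $\tfrac18\norm{\nabla u}_{L^2(A(R))}^2$ term. The only differences from the paper's proof are cosmetic: you use $\varphi^2$ where the paper uses $\varphi$, and you apply Young before H\"older in the $\int\nabla u:\nabla w$ estimate rather than after.
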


\begin{proof}
Since $u$ is smooth with $\divg u = 0$ and $\supp \nabla \varphi_{r,R} \subset A(R)$, we have $u \cdot \nabla \varphi_{r,R} \in L_0^\alpha(A(R))$.
Let $w = \mathcal{B} (u \cdot \nabla \varphi_{r,R})$ in $A(R)$, where $\mathcal{B}$ denotes the Bogovskii operator (see \cite[Lemma 3]{MR4354995} or \cite[Lemma 4]{MR4572397}).
Then $\divg w = u \cdot \nabla \varphi_{r,R}$ and for $1 < p < \infty$,
\begin{equation}
\label{Bog}
\norm{\nabla w}_{L^p (A(R))} \lesssim_p (R-r)^{-1} \norm{u}_{L^p (A(R))}.
\end{equation}
Multiplying the first equation of \eqref{E11} by $(u \varphi_{r,R} - w)$, the second equation of \eqref{E11} by $v \varphi_{r,R}$, and the third equation of \eqref{E11} by $\theta \varphi_{r,R}$, integrating by parts with $\divg u = 0$, and then summing the resulting equations yields 
\begin{align*}
&\int \left(|\nabla u|^2 + |\nabla v|^2 + |\nabla \theta|^2\right) \varphi_{r,R} dx \\
&= \frac{1}{2} \int \left(|u|^2 + |v|^2 + |\theta|^2\right) \Delta \varphi_{r,R} dx
+ \frac{1}{2} \int \left(|u|^2 + |v|^2 + |\theta|^2\right) u \cdot \nabla \varphi_{r,R} dx
+ \int (v \cdot u) (v \cdot \nabla \varphi_{r,R}) dx \\
&\quad + \int \theta v \cdot \nabla \varphi_{r,R} dx
- \int_{A(R)} (u \otimes u + v \otimes v) : \nabla w dx
+ \int_{A(R)} \nabla u : \nabla w dx.
\end{align*}
We use $\supp (\nabla \varphi_{r,R}) \subset A(R)$, the H\"older inequality, and $R \le \rho$ to get 
\begin{align*}
&\int \left(|u|^2 + |v|^2 + |\theta|^2\right) \Delta \varphi_{r,R} dx
+ \int \left(|u|^2 + |v|^2 + |\theta|^2\right) u \cdot \nabla \varphi_{r,R} dx
+ \int (v \cdot u) (v \cdot \nabla \varphi_{r,R}) dx
+ \int \theta v \cdot \nabla \varphi_{r,R} dx \\
&\lesssim (R-r)^{-1} \left( (R-r)^{-1} \rho^\frac{3}{\alpha} + \norm{u}_{L^{\alpha} (A(R))} \right)
\left( \norm{u}_{L^{2\alpha'} (A(R))}^2 
+ \norm{v}_{L^{2\alpha'} (A(R))}^2 
+ \norm{\theta}_{L^{2\alpha'} (A(R))}^2 \right) \\
&\quad + (R-r)^{-1} \norm{v}_{L^2(A(R))} \norm{\theta}_{L^2 (A(R))}.
\end{align*}
By the H\"older inequality and \eqref{Bog}, we get 
\begin{align*}
\int_{A(R)} (u \otimes u + v \otimes v) : \nabla w dx
&\le \left(\norm{u}_{L^{2\alpha'} (A(R))}^2 + \norm{v}_{L^{2\alpha'} (A(R))}^2\right) \norm{\nabla w}_{L^\alpha (A(R))} \\
&\lesssim_\alpha (R-r)^{-1} \norm{u}_{L^\alpha (A(R))} \left(\norm{u}_{L^{2\alpha'} (A(R))}^2 
+ \norm{v}_{L^{2\alpha'} (A(R))}^2\right).
\end{align*}
By \eqref{Bog}, the H\"older inequality, and the Young inequality, we obtain
\begin{align*}
\int_{A(R)} \nabla u : \nabla w dx
&\lesssim_\alpha (R-r)^{-1} \rho^{\frac{3}{2\alpha}} 
\norm{u}_{L^{2\alpha'} (A(R))} 
\norm{\nabla u}_{L^2 (A(R))} \\
&\le \frac{1}{8} \norm{\nabla u}_{L^2 (A(R))}^2 + C(\alpha) (R-r)^{-2} \rho^{\frac{3}{\alpha}} \norm{u}_{L^{2\alpha'} (A(R))}^2.
\end{align*}
Combining all the estimates, we get \eqref{LEI}.
\end{proof}

\section{Proof of Theorem \ref{T1}}
\label{S3}

We first assume \eqref{A1}.
Then we have an increasing sequence $\set{\rho_j}$ such that $1 < \rho_1$, and for all $j$,
\begin{equation}
\label{E31}
G(u;\alpha,\rho_j) + H(v;\beta,\rho_j) 
+ H(\theta;\gamma,\rho_j) \le M < \infty.
\end{equation}
Fix $r < R$ satisfying $\rho_j \le r < R \le 2\rho_j$.
We shall estimate the right side of \eqref{LEI}.
As $A(R) \subset B(2\rho_j) \setminus B(\rho_j/2)$, by \eqref{E31}, we have 
\begin{equation}
\label{E32}
(R-r)^{-1} \rho_j^\frac{3}{\alpha} + \norm{u}_{L^{\alpha} (A(R))}
\lesssim (R-r)^{-1} \rho_j \left( \rho_j^{\frac{3}{\alpha} - 1} + \rho_j^{\frac{2}{\alpha} - \frac{1}{3}} M \right)
\lesssim_M (R-r)^{-1} \rho_j^{\frac{2}{\alpha} + \frac{2}{3}}.
\end{equation}
Here, we use the assumption $\alpha > 3/2$ in the last inequality.
By the H\"older inequality, \eqref{PS}, and \eqref{E31}, we get
\begin{equation}
\label{E33}
\norm{u}_{L^{2\alpha'} (A(R))}^2 
\lesssim \norm{u}_{L^{\alpha} (A(R))}^{\frac{2(2\alpha-3)}{6-\alpha}} \norm{u}_{L^{6} (A(R))}^{\frac{6(3-\alpha)}{6-\alpha}}
\lesssim_M \rho_j^{\frac{2(2\alpha-3)}{3\alpha}} 
\left( \norm{\nabla u}_{L^2(A(R))} + \rho_j^{-\frac{1}{\alpha} + \frac{1}{6}} \right)^{\frac{6(3-\alpha)}{6-\alpha}}.
\end{equation}
Combining \eqref{E32} and \eqref{E33}, we have
\begin{equation}
\label{E34}
(R-r)^{-1} \left( (R-r)^{-1} \rho^\frac{3}{\alpha} + \norm{u}_{L^{\alpha} (A(R))} \right) \norm{u}_{L^{2\alpha'} (A(R))}^2
\lesssim_M (R-r)^{-2} \rho_j^{2} 
\left( \norm{\nabla u}_{L^2(A(R))} + \rho_j^{-\frac{1}{\alpha} + \frac{1}{6}} \right)^{\frac{6(3-\alpha)}{6-\alpha}}.
\end{equation}
Using the Young inequality, we get 
\begin{equation}
\label{E35}
(R-r)^{-1} \left( (R-r)^{-1} \rho^\frac{3}{\alpha} + \norm{u}_{L^{\alpha} (A(R))} \right) \norm{u}_{L^{2\alpha'} (A(R))}^2
\le \frac{1}{8} E(A(R)) + C(M) \left( (R-r)^{-1} \rho_j \right)^{\frac{2(6-\alpha)}{2\alpha-3}}.
\end{equation}
Similar to \eqref{E33}, we use the H\"older inequality, \eqref{PS}, and \eqref{E31} to get
\begin{equation}
\label{E36}
\norm{v}_{L^{2\alpha'} (A(R))}^2 
\lesssim \norm{v}_{L^{\beta} (A(R))}^{\frac{2\beta(2\alpha-3)}{\alpha(6-\beta)}} \norm{v}_{L^{6} (A(R))}^{\frac{6(2\alpha + \beta - \alpha\beta)}{\alpha(6-\beta)}}
\lesssim_M \rho_j^{\frac{2\alpha-3}{2\alpha}}
\left( \norm{\nabla v}_{L^2(A(R))} + \rho_j^{-\frac{3}{2\beta} + \frac{1}{4}} \right)^{\frac{6(2\alpha + \beta - \alpha\beta)}{\alpha(6-\beta)}}.
\end{equation}
and
\begin{equation}
\label{E37}
\norm{\theta}_{L^{2\alpha'} (A(R))}^2 
\lesssim \norm{\theta}_{L^{\gamma} (A(R))}^{\frac{2\gamma(2\alpha-3)}{\alpha(6-\gamma)}} \norm{\theta}_{L^{6} (A(R))}^{\frac{6(2\alpha + \gamma - \alpha\gamma)}{\alpha(6-\gamma)}}
\lesssim_M \rho_j^{\frac{2\alpha-3}{2\alpha}} 
\left( \norm{\nabla \theta}_{L^2(A(R))} + \rho_j^{-\frac{3}{2\gamma} + \frac{1}{4}} \right)^{\frac{6(2\alpha + \gamma - \alpha\gamma)}{\alpha(6-\gamma)}}.
\end{equation}
Combining \eqref{E32}, \eqref{E36}, and \eqref{E37}, we have
\begin{align}
\begin{split}
\label{E38}
&(R-r)^{-1} \left( (R-r)^{-1} \rho^\frac{3}{\alpha} + \norm{u}_{L^{\alpha} (A(R))} \right)
\left( \norm{v}_{L^{2\alpha'} (A(R))}^2 
+ \norm{\theta}_{L^{2\alpha'} (A(R))}^2 \right) \\
&\lesssim_M (R-r)^{-2} \rho_j^{\frac{5}{3} + \frac{1}{2\alpha}} 
\left( \left( \norm{\nabla v}_{L^2(A(R))} + \rho_j^{-\frac{3}{2\beta} + \frac{1}{4}} \right)^{\frac{6(2\alpha + \beta - \alpha\beta)}{\alpha(6-\beta)}} 
+ \left( \norm{\nabla \theta}_{L^2(A(R))} + \rho_j^{-\frac{3}{2\gamma} + \frac{1}{4}} \right)^{\frac{6(2\alpha + \gamma - \alpha\gamma)}{\alpha(6-\gamma)}} \right).
\end{split}
\end{align}
After some routine calculations involving the Young inequality, we can easily obtain
\begin{align}
\begin{split}
\label{E39}
&(R-r)^{-1} \left( (R-r)^{-1} \rho^\frac{3}{\alpha} + \norm{u}_{L^{\alpha} (A(R))} \right)
\left( \norm{v}_{L^{2\alpha'} (A(R))}^2 
+ \norm{\theta}_{L^{2\alpha'} (A(R))}^2 \right) \\
&\le \frac{1}{8} E(A(R))
+ C(M) \left( (R-r)^{-1} \rho_j \right)^{\frac{2\alpha(6-\beta)}{\beta(2\alpha-3)}}
+ C(M) \left( (R-r)^{-1} \rho_j \right)^{\frac{2\alpha(6-\gamma)}{\gamma(2\alpha-3)}}.
\end{split}
\end{align}
By the H\"older inequality, \eqref{E31} and \eqref{PS}, we have
\[
\norm{v}_{L^{2}(A(R))} \lesssim \norm{v}_{L^{\beta}(A(R))}^{\frac{2\beta}{6-\beta}} \norm{v}_{L^{6}(A(R))}^{\frac{3(2-\beta)}{6-\beta}}
\lesssim_M \rho_j^{\frac{1}{2}} \left( \norm{\nabla v}_{L^2(A(R))} + \rho_j^{-\frac{3}{2\beta} + \frac{1}{4}} \right)^{\frac{3(2-\beta)}{6-\beta}}
\]
and
\[
\norm{\theta}_{L^{2}(A(R))} \lesssim \norm{\theta}_{L^{\gamma}(A(R))}^{\frac{2\gamma}{6-\gamma}} \norm{\theta}_{L^{6}(A(R))}^{\frac{3(2-\gamma)}{6-\gamma}}
\lesssim_M \rho_j^{\frac{1}{2}} \left( \norm{\nabla \theta}_{L^2(A(R))} + \rho_j^{-\frac{3}{2\gamma} + \frac{1}{4}} \right)^{\frac{3(2-\gamma)}{6-\gamma}}.
\]
Hence
\begin{equation}
\label{E310}
(R-r)^{-1} \norm{v}_{L^2(A(R))} \norm{\theta}_{L^2 (A(R))}
\lesssim_M (R-r)^{-1} \rho_j 
\left( \norm{\nabla v}_{L^2(A(R))} + \rho_j^{-\frac{3}{2\beta} + \frac{1}{4}} \right)^{\frac{6-3\beta}{6-\beta}} 
\left( \norm{\nabla \theta}_{L^2(A(R))} + \rho_j^{-\frac{3}{2\gamma} + \frac{1}{4}} \right)^{\frac{6-3\gamma}{6-\gamma}}.
\end{equation}
Using the Young inequality, we can obtain
\begin{equation}
\label{E311}
(R-r)^{-1} \norm{v}_{L^2(A(R))} \norm{\theta}_{L^2 (A(R))}
\le \frac{1}{8} E(A(R))
+ C(M) \left( (R-r)^{-1} \rho_j \right)^{\frac{(6-\beta)(6-\gamma)}{\beta(6-\gamma) + \gamma(6-\beta)}}.
\end{equation}
Combining \eqref{LEI}, \eqref{E35}, \eqref{E39}, and \eqref{E311}, we can deduce that
\[
E(B(r)) \le \frac{1}{2} E(B(R)) + C(M) \left( (R-r)^{-1} \rho_j \right)^k
\]
holds for all $r < R$ in the range $[\rho_j, 2\rho_j]$ and $k = \max \set{\frac{2(6-\alpha)}{2\alpha-3}, \frac{2\alpha(6-\beta)}{\beta(2\alpha-3)}, \frac{2\alpha(6-\gamma)}{\gamma(2\alpha-3)}, \frac{(6-\beta)(6-\gamma)}{\beta(6-\gamma) + \gamma(6-\beta)}}$.
Applying the standard iteration lemma (see \cite[Lemma 2]{MR4591749} or \cite[V. Lemma 3.1]{MR717034}), we have
\[
E(B(\rho_j)) < \infty
\]
for all $j$ and hence $\nabla u, \nabla v, \nabla \theta \in L^2(\mathbb{R}^3)$.
To conclude the proof, we use \eqref{LEI} with $r= \rho_j$ and $R = 2\rho_j$:
\begin{align}
\begin{split}
\label{LEI2}
E(B(\rho_j))
&\lesssim \rho_j^{-1} \left( \rho_j^{-1 + \frac{3}{\alpha}} + \norm{u}_{L^{\alpha} (A(2\rho_j))} \right)
\left( \norm{u}_{L^{2\alpha'} (A(2\rho_j))}^2 
+ \norm{v}_{L^{2\alpha'} (A(2\rho_j))}^2 
+ \norm{\theta}_{L^{2\alpha'} (A(2\rho_j))}^2 \right) \\
&\quad + \rho_j^{-1} \norm{v}_{L^2(A(2\rho_j))} \norm{\theta}_{L^2 (A(2\rho_j))} 
+ E(A(2\rho_j)).
\end{split}
\end{align}
Employing \eqref{E34} and \eqref{E38}, we have
\begin{align*}
&\rho_j^{-1} \left( \rho_j^{-1 + \frac{3}{\alpha}} + \norm{u}_{L^{\alpha} (A(2\rho_j))} \right)
\left( \norm{u}_{L^{2\alpha'} (A(2\rho_j))}^2 
+ \norm{v}_{L^{2\alpha'} (A(2\rho_j))}^2 
+ \norm{\theta}_{L^{2\alpha'} (A(2\rho_j))}^2 \right) \\
&\lesssim \left( \norm{\nabla u}_{L^2(A(2\rho_j))} + \rho_j^{-\frac{1}{\alpha} + \frac{1}{6}} \right)^{\frac{6(3-\alpha)}{6-\alpha}}
+ \left( \norm{\nabla v}_{L^2(A(2\rho_j))} + \rho_j^{-\frac{3}{2\beta} + \frac{1}{4}} \right)^{\frac{6(2\alpha + \beta - \alpha\beta)}{\alpha(6-\beta)}} 
+ \left( \norm{\nabla \theta}_{L^2(A(2\rho_j))} + \rho_j^{-\frac{3}{2\gamma} + \frac{1}{4}} \right)^{\frac{6(2\alpha + \gamma - \alpha\gamma)}{\alpha(6-\gamma)}}.
\end{align*}
Employing \eqref{E310}, we have
\[
\rho_j^{-1} \norm{v}_{L^2(A(2\rho_j))} \norm{\theta}_{L^2 (A(2\rho_j))} 
\lesssim \left( \norm{\nabla v}_{L^2(A(2\rho_j))} + \rho_j^{-\frac{3}{2\beta} + \frac{1}{4}} \right)^{\frac{6-3\beta}{6-\beta}} 
\left( \norm{\nabla \theta}_{L^2(A(2\rho_j))} + \rho_j^{-\frac{3}{2\gamma} + \frac{1}{4}} \right)^{\frac{6-3\gamma}{6-\gamma}}.
\]
Now recall the assumption $3/2 < \alpha < 3$ and $1 \le \beta, \gamma < 2$.
Since all the exponents of $\rho_j$ are negative and $E(A(2\rho_j)) \to 0$ as $j \to \infty$, right side of \eqref{LEI2} goes to $0$ as $j \to \infty$, .
Therefore, we can conclude that $\nabla u = \nabla v = \nabla \theta = 0$ and hence $u, v, \theta$ are constant.
Since $G(u;\alpha,\rho_j) \le M$, we get $|u| \lesssim M \rho_j^{- \left( \frac{1}{\alpha} + \frac{1}{3} \right)}$, we should have $|u|=0$.
Similarly, we obtain that $v = \theta = 0$.
This completes the first part of Theorem \ref{T1}.

If we assume \eqref{A2}, then the proof is much simpler.
If $\alpha =3$ in \eqref{LEI} and \eqref{E31}, then we have 
\[
\lim_{j \to \infty} \left( G(u;3,\rho_j) + H(v;2,\rho_j) \right) = 0
\quad \text{and} \quad 
\sup_{j \in \mathbb{N}} H(\theta;2,\rho_j) \le M < \infty.
\]
Moreover, note that
\[
(R-r)^{-1} \norm{v}_{L^2(A(R))} \norm{\theta}_{L^2 (A(R))} \le (R-r)^{-1} \rho_j H(v;2,\rho_j) H(\theta;2,\rho_j).
\]
Repeating the same argument, we can easily obtain $u = v = \theta = 0$.
The proof for the case \eqref{A3} is similar to the case \eqref{A2}.
\qed

\section*{Acknowledgement}
This work was supported by the National Research Foundation of Korea(NRF) grant funded by the Korean government(MSIT) (No. 2021R1A2C4002840).

\bibliography{my}

\bibliographystyle{plain}

\end{document}